\definecolor{myblue}{rgb}{0.0, 0.0, 1.0}
\definecolor{mygreen}{rgb}{0.01,0.75,0.20}
\newtheorem{theorem}{Theorem}[section]
\newtheorem{corollary}[theorem]{Corollary}
\newtheorem{lemma}[theorem]{Lemma}
\newtheorem{proposition}[theorem]{Proposition}
\newtheorem{definition}[theorem]{Definition}
\newtheorem{remark}[theorem]{Remark}
\theoremstyle{definition}
\numberwithin{equation}{section}
\newcommand{\dx}{\,\mathrm{d}x}
\newcommand{\dt}{\,\mathrm{d}t}
\def\ga{\alpha}     \def\gb{\beta}       \def\gg{\gamma}
       \def\gd{\delta}      
                         \def\vge{\varepsilon}
       \def\vgf{\varphi}    
            \def\gl{\lambda}
\def\gm{\mu}        \def\gn{\nu}         
       \def\gt{\tau}
     \def\Gd{\Delta}      
\def\Gl{\Lambda}          
\def\Gw{\Omega}              
\DeclarePairedDelimiter\norm{\lVert}{\rVert}%
\let\oldnorm\norm
\def\norm{\@ifstar{\oldnorm}{\oldnorm*}}
\newcommand{\al} {\alpha}
\newcommand{\De} {\Delta}
\newcommand{\Om} {\Omega}
\newcommand{\la} {\lambda}
\newcommand{\La} {\Lambda}
\newcommand{\ra} {\rightarrow}
\newcommand\restr[2]{{
  \left.\kern-\nulldelimiterspace 
  #1 
  \right|_{#2} 
  }}
\def\w{{\widetilde w}}
\def\w2{{W^{1,2}_0(\Om)}}
\def\hh2{{H^1_0(\Om)}}
\def\C{{\mathcal C}}
\def\N{{\mathbb N}}
\def\F{{\mathcal F}}
\def\R{{\mathbb R}}
\def\({{\Big(}}
\def\){{\Big)}}
\def\ws2{{\F_{\frac{N}{2}}}}
\def\c1{{\C_c^1}}
\def\dt{{\rm d}t}
\def\dx{{\rm d}x}
\newcommand{\Hmm}[1]{\leavevmode{\marginpar{\tiny%
			$\hbox to 0mm{\hspace*{-0.5mm}$\leftarrow$\hss}%
			\vcenter{\vrule depth 0.1mm height 0.1mm width \the\marginparwidth}%
			\hbox to
			0mm{\hss$\rightarrow$\hspace*{-0.5mm}}$\\\relax\raggedright #1}}}
\begin{document}
\title[Lower bound for the weighted-Hardy constant]{A lower bound for the weighted-Hardy constant for domains satisfying a uniform exterior cone condition}
	
	\author {Ujjal Das}
	
	\address {Ujjal Das, Department of Mathematics, Technion - Israel Institute of
		Technology,   Haifa, Israel}
	\email {ujjaldas@campus.technion.ac.il}
	\author{Yehuda Pinchover}
	\address{Yehuda Pinchover,
		Department of Mathematics, Technion - Israel Institute of
	Technology,   Haifa, Israel}
	\email{pincho@technion.ac.il}
	\begin{abstract}
		We consider weighted Hardy inequalities involving the distance function to the boundary of a domain in the $N$-dimensional Euclidean space with nonempty boundary. We give a lower bound for the corresponding best Hardy constant for a domain satisfying a uniform exterior cone condition. This lower bound depends on the aperture of the corresponding infinite circular cone.

		\medskip
		
		\noindent  2000  \! {\em Mathematics  Subject  Classification.}
		Primary  \! 49J40; Secondary  35B09, 35J62.\\[1mm]
		\noindent {\em Keywords:}  criticality theory, exterior cone condition,  Hardy inequality, quasilinear elliptic equation,  positive solutions.
	\end{abstract}
\maketitle
\section{Introduction}
Let $N \geq 2$ and $\Gw \subsetneq \R^N$ be a domain. Denote by $\delta_{\Gw}(x)$ the distance of  a point $x\in \Gw$ to $\partial \Gw$. 
Fix $p \in (1,\infty)$ and $\alpha\in \mathbb{R}$. 
We say that the $L^{\al,p}$-{\it{Hardy inequality}} (or the {\it weighted Hardy inequality}) is satisfied in $\Om$ if there exists $C>0$ such that
\begin{align} \label{Lp_Hardy}
  \int_{\Om} |\nabla \varphi|^p \ \delta_{\Om}^{-\al} \dx \geq  C  \int_{\Om} |\varphi|^p \ \delta_{\Om}^{-(\al+p)} \dx \qquad  \forall \varphi \in C_c^{\infty}(\Om).  
\end{align}
The one-dimensional weighted Hardy inequality was proved by Hardy (see, \cite[p.~329]{Hardy}). 
For $\al=0$, the above inequality is often called the {\it geometric Hardy inequality} for domains with boundary. A review for this case is presented in \cite{Balinsky}. The validity of \eqref{Lp_Hardy} indeed depends on $\alpha$ and the domain $\Gw$.
For instance, if $\al+p \leq 1$, then \eqref{Lp_Hardy} does not hold on bounded Lipschitz domains \cite{Leherback1}. However, it holds on $C^{1,\gamma}$-{\it{exterior domains}} with $\gamma \in (0,1]$, when $\al+p<1$ \cite[Corollary 7.3]{DDP}.
On the other hand, for $\al+p>1$, \eqref{Lp_Hardy} is established  for various types of domains: bounded Lipschitz domains \cite{Necas}, domains with H\"older boundary \cite{Kufner},  unbounded John domains \cite{Leherback2}, domains having uniformly $p$-fat complement \cite{Lewis, Wannebo2}, see also the references therein. Set
\begin{align} \label{Lp_Hardy_const}
 \mathbb{H}_{\al,p}(\Om) =\mathbb{H}_{\al,p}:=\inf \left\{\int_{\Om} |\nabla \varphi|^p \delta_{\Om}^{-\al} \dx  \biggm| \int_{\Om} |\varphi|^p \delta_{\Om}^{-(\al+p)} \dx\!=\!1, \varphi \in C_c^{\infty}(\Om) \right\} .
\end{align}
It is clear that $\mathbb{H}_{\al,p}(\Om) \geq 0$. As mentioned above, if $\al+p \leq 1$, then the $L^{\al,p}$-Hardy inequality does not hold for a bounded smooth domain (in other words,  $\mathbb{H}_{\al,p}(\Om)=0$). However, if  \eqref{Lp_Hardy} holds  for a given domain $\Gw$, then  it holds with $\mathbb{H}_{\al,p}(\Gw)$ as  the best constant for the inequality \eqref{Lp_Hardy}. We call the constant $\mathbb{H}_{\al,p}$  the {\em $L^{\al,p}$-Hardy constant} (or simply the weighted Hardy constant) of $\Om$. 

There is significant interest in finding or estimating the weighted Hardy constant $\mathbb{H}_{\al,p}(\Om)$ for various domains.  
We recall that if $\al+p>N$, then for {\em any} domain $\Om \subsetneq \R^N$, we have 
$\mathbb{H}_{\al,p}(\Om)\geq \left|\frac{\al+p-N}{p}\right|^{p}$ \cite[Theorem 5]{Avkhadiev} and \cite[Theorem 1.1]{GPP}. 
  Further, if $\Om \subsetneq \R^N$ is a convex domain and $\al+p>1$, then $\mathbb{H}_{\al,p}(\Om) = \left|\frac{\al+p-1}{p}\right|^{p}$ \cite[Theorem A]{Avkhadiev_sharp}. Many authors are interested to estimate this constant for certain non-convex domains, for instance, see \cite[lambda-close to convex domain]{Avkhadiev2022}, \cite[Annular domain in plane]{Avkhadiev2018}, \cite[domains with convex complement]{Avkhadiev_complement}, \cite[non-convex planar quadrilateral]{Tertikas}.
  
 Using the {\em supersolution construction}, Agmon trick, and Agmon-Allegretto-Piepen\-brink (AAP) type theorem (to be explained in sections~\ref{sec_pre} and \ref{sec_main}), we give the following lower bound of $\mathbb{H}_{\ga,p}(\Om)$ for a domain $\Om$ that admits a positive $p$-superharmonic function.
\begin{theorem}[{cf. \cite[Lemma~3.1]{LP} for the case $\ga=0$}]\label{Thm:easy}
Let $\Gw \subsetneq \R^N$ be a domain that admits a continuous positive $p$-superharmonic function $U$ (i.e., $-\De_p U\geq 0$) in $\Om$ such that $\frac{|\nabla U|}{U} \delta_{\Om} \geq \La_{\Om,U}>0$ a.e. in $\Om$. 
If $\La_{\Om,U}(p -1)>|\al| $, then 
$$\mathbb{H}_{\al,p}(\Om) \geq \mu_{\al,p,U}(\Om):= \left|\frac{\La_{\Om,U}(p -1)-|\al|}{p}\right|^p \,.$$ 
Moreover, the lower bound is sharp in the sense that for weakly mean convex domains (i.e., $-\De \delta_{\Om} \geq 0$) and $1-p<\alpha \leq 0$  we have $\mathbb{H}_{\al,p}(\Om) = \mu_{\al,p,U}(\Om)=|(\al+p-1)/p|^p$.
\end{theorem}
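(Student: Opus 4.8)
The plan is to derive the lower bound from an Agmon--Allegretto--Piepenbrink (AAP) type theorem, which reduces the validity of $\mathbb{H}_{\alpha,p}(\Omega)\ge\mu$ to the existence of a positive supersolution of the associated weighted quasilinear equation. Writing
\[
Q_\mu(\varphi):=\int_\Omega |\nabla\varphi|^p\,\delta_\Omega^{-\alpha}\dx-\mu\int_\Omega |\varphi|^p\,\delta_\Omega^{-(\alpha+p)}\dx,
\]
the assertion $\mathbb{H}_{\alpha,p}(\Omega)\ge\mu$ is by definition equivalent to $Q_\mu\ge0$ on $C_c^\infty(\Omega)$, and by the AAP theorem this holds once we produce a positive $v$ with
\[
-\diver\!\left(\delta_\Omega^{-\alpha}|\nabla v|^{p-2}\nabla v\right)\ge \mu\,\delta_\Omega^{-(\alpha+p)}v^{p-1}\qquad\text{in }\Omega
\]
in the weak sense. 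Thus the whole problem becomes the construction of such a $v$ out of the given $U$.

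For the supersolution I would use the ansatz $v=U^{\gamma}$ with $\gamma\in(0,1)$ to be chosen. A direct computation gives $\delta_\Omega^{-\alpha}|\nabla v|^{p-2}\nabla v=\gamma^{p-1}\delta_\Omega^{-\alpha}U^{(\gamma-1)(p-1)}|\nabla U|^{p-2}\nabla U$; taking the divergence produces a term proportional to $\Delta_p U\le0$, which I discard (this is exactly where $p$-superharmonicity of $U$ enters), leaving
\[
-\diver\!\left(\delta_\Omega^{-\alpha}|\nabla v|^{p-2}\nabla v\right)\ge -\gamma^{p-1}\,\nabla\!\left(\delta_\Omega^{-\alpha}U^{(\gamma-1)(p-1)}\right)\!\cdot|\nabla U|^{p-2}\nabla U.
\]
Expanding the gradient, setting $t:=\delta_\Omega|\nabla U|/U\ (\ge\Lambda_{\Omega,U})$ and $c:=\nabla\delta_\Omega\cdot\nabla U/|\nabla U|$ (so $|c|\le1$ since $|\nabla\delta_\Omega|=1$ a.e.), and using $v^{p-1}=U^{\gamma(p-1)}$, the required supersolution inequality collapses to the scalar inequality
\[
\gamma^{p-1}\Big[\alpha c\,t^{p-1}+(1-\gamma)(p-1)\,t^{p}\Big]\ge\mu,
\]
which must hold for all $t\ge\Lambda_{\Omega,U}$ and $|c|\le1$. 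The worst sign is $\alpha c=-|\alpha|$, so I must minimise $t\mapsto\gamma^{p-1}t^{p-1}\big[(1-\gamma)(p-1)t-|\alpha|\big]$ over $t\ge\Lambda_{\Omega,U}$. Restricting to $\gamma\le 1-\tfrac{|\alpha|}{(p-1)\Lambda_{\Omega,U}}$ (a nonempty range precisely because of the hypothesis $\Lambda_{\Omega,U}(p-1)>|\alpha|$) makes the bracket nonnegative and the map increasing, so the minimum is attained at $t=\Lambda_{\Omega,U}$. Maximising the resulting expression $\gamma^{p-1}\Lambda_{\Omega,U}^{p-1}\big[(1-\gamma)(p-1)\Lambda_{\Omega,U}-|\alpha|\big]$ over $\gamma$ gives the optimal exponent $\gamma=\big(\Lambda_{\Omega,U}(p-1)-|\alpha|\big)/(p\,\Lambda_{\Omega,U})$, and substituting it back yields exactly $\mu=\big|(\Lambda_{\Omega,U}(p-1)-|\alpha|)/p\big|^{p}=\mu_{\alpha,p,U}(\Omega)$.

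For the sharpness statement I would take $U=\delta_\Omega$. Since $|\nabla\delta_\Omega|=1$ a.e., one has $\Delta_p\delta_\Omega=\Delta\delta_\Omega$, so weak mean convexity $-\Delta\delta_\Omega\ge0$ coincides with $-\Delta_p\delta_\Omega\ge0$, while $\delta_\Omega|\nabla\delta_\Omega|/\delta_\Omega\equiv1$ forces $\Lambda_{\Omega,U}=1$. For $1-p<\alpha\le0$ we have $|\alpha|=-\alpha$ and $\Lambda_{\Omega,U}(p-1)=p-1>|\alpha|$, so the first part applies and gives $\mathbb{H}_{\alpha,p}(\Omega)\ge\big|(\alpha+p-1)/p\big|^{p}$. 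The matching upper bound comes from a localisation at a smooth boundary point: in boundary normal coordinates $\delta_\Omega$ agrees to leading order with the distance to the tangent hyperplane, and inserting cut-off functions behaving like $\delta_\Omega^{\beta}$ with $\beta\downarrow(\alpha+p-1)/p$ into the Rayleigh quotient \eqref{Lp_Hardy_const} drives it down to $\big|(\alpha+p-1)/p\big|^{p}$, whence equality.

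The step I expect to be most delicate is not the optimisation (which closes neatly and essentially forces the value $\mu_{\alpha,p,U}$) but the rigour behind the supersolution computation: $\delta_\Omega$ is only Lipschitz and $U$ is merely a weak $p$-supersolution, so the pointwise manipulations above must be read distributionally, discarding the $\Delta_p U$ term against nonnegative test functions and justifying $\Delta_p\delta_\Omega=\Delta\delta_\Omega$ as signed measures. Setting up the AAP equivalence in this weighted, possibly non-smooth framework, and producing the boundary test functions for the upper bound, are the genuine technical cores of the argument.
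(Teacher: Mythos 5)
Your argument is correct and reaches the paper's constant, but it takes a genuinely more direct route than the paper's own proof. Both proofs reduce, via the AAP theorem, to producing a positive weak supersolution of $-\De_{\al,p}u=\mu\,\gd_\Gw^{-(\al+p)}|u|^{p-2}u$. The paper, however, does not use the single power $U^{\gamma}$: it works on a smooth exhaustion $K_n\Subset\Gw$, normalizes $U\ll 1$ there, takes the two-power Agmon ansatz $U^{\nu}-U^{\eta}$ with $\nu<\eta$ near the optimal exponent, verifies the supersolution inequality by an expansion in powers of $U^{\eta-\nu}$ (the elementary inequality $A<\la_{\nu}(p-1)$ of its Appendix B), and then passes to the limit twice — $\nu\searrow\frac{\hat\al+p-1}{p}$ for the constant, and $K_n\nearrow\Gw$ via the Harnack convergence principle. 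Your computation with $v=U^{\gamma}$, $\gamma=\bigl(\La_{\Gw,U}(p-1)-|\al|\bigr)/(p\La_{\Gw,U})$, yields the optimal $\mu$ in one step, globally in $\Gw$, with no normalization of $U$, no exhaustion, and no auxiliary inequality; I checked the scalar reduction $\gamma^{p-1}\bigl[\al c\,t^{p-1}+(1-\gamma)(p-1)t^{p}\bigr]\ge\mu$ for $t\ge\La_{\Gw,U}$, $|c|\le 1$, and the monotonicity and optimization close exactly as you claim. What the paper's heavier machinery buys here is mainly uniformity with its Theorem~\ref{Thm:main}, where the same two-step scheme (supersolution on $K_n$, then Harnack convergence) is genuinely needed because the supersolution $u_{\vge}$ is only built on compact subsets; for Theorem~\ref{Thm:easy} itself your route is a simplification. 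The one technical point you must still discharge — and you correctly identify it — is the weak-form justification of discarding $\Gd_pU$: this is done by testing the supersolution inequality for $U$ against $\phi\,\gd_\Gw^{-\al}U^{(\gamma-1)(p-1)}$, which is an admissible nonnegative test function since $U$ is continuous, positive and in $W^{1,p}_{\loc}$; the paper delegates exactly this to the chain rule of \cite[Lemma~2.10]{DDP}.

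On sharpness, your lower bound via $U=\gd_\Gw$, $\La_{\Gw,U}=1$ matches the paper. For the matching upper bound the paper simply cites the known equality $\mathbb{H}_{\al,p}(\Gw)=|(\al+p-1)/p|^{p}$ for weakly mean convex domains with $\al+p>1$ (\cite{Balinsky}), whereas you sketch a test-function localization at a \emph{smooth} boundary point. A weakly mean convex domain need not have any smooth boundary point, so as written that step has a gap; either cite the known result as the paper does, or run the localization at a point where $\pa\Gw$ is touched by an interior ball or is locally a Lipschitz graph, which suffices for the upper bound and is available here.
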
  
  Using the above result, one can prove for $N-p<\al \leq 0$, the  well-known fact that for any domain $\mathbb{H}_{\al,p}(\Om)\geq \left(\frac{\ga+p-N}{p} \right)^p$ (Remark \ref{Rmk:Pinchover_Divya}). Moreover, as a consequence of this theorem, we establish the $L^{0,N}$-Hardy inequality in $\R^N \setminus S$, where $S=\{x\in \R^N: x_1>0, x_i=0 \ \forall i=2,\ldots,N\}$, see Corollary \ref{Cor:easy}. Note that this domain is not Lipschitz, and therefore,  this gives a simple  example of non-Lipschitz domain where the Hardy inequality holds.

The main aim of this article is to use the above theorem and give a lower estimate of the weighted Hardy constant for domains satisfying a {\em uniform exterior cone condition}. Let 
$$\mathcal{C}_{\beta}:=\{x \in \R^N: x_N \geq (\cos \beta)|x|\}$$ 
be the closed circular cone of aperture $\beta\in (0,\pi]$ with a vertex at the origin and axis in the direction of $e_N=(0,\ldots,1)$. Let $\Theta$ be a rotation in $\R^N$. We denote by $\mathcal{C}_{\beta}(z,\Theta) := z + \Theta (\mathcal{C}_{\beta})$ the rotated cone with a vertex at $z$ and aperture $\beta$.  
\begin{definition}{\em 
		We say that $\Gw \subset \R^N$ satisfies the  {\em $(\beta,r_0)$-uniform exterior cone condition}, if there exist $\beta \in (0,\pi]$ and $0<r_0\leq \infty$ such that for every $z \in \partial \Om$ there exists a rotation $\Theta$ such that
\begin{align} \label{UCC}
\overline{\Om} \cap B_{r_0}(z) \subset \mathcal{C}_{\beta}(z,\Theta) ,
\end{align}
}
\end{definition}
It is well
known that bounded Lipschitz domains satisfy the uniform exterior cone condition (see \cite{Kenig}). Notice that any convex domain satisfies the $(\frac{\pi}{2},\infty)$-uniform exterior cone condition. Moreover, any domain satisfying $(\beta,\infty)$-uniform exterior cone condition are simply connected.
For $N=2$, using Koebe one-quarter theorem, Ancona \cite{Ancona} showed that $\mathbb{H}_{0,2}(\Om) \geq 1/{16}$ if $\Gw$ is a simply connected domain. Laptev and Sobolev \cite{LS} established a more refined version of Koebe's theorem and obtained
a Hardy inequality which takes into account a quantitative measure of non-convexity. In particular,
they proved that if $\Om$ is a planar domain satisfying the $(\beta,\infty)$-uniform exterior cone condition, then $\mathbb{H}_{0,2}(\Om) \geq {\pi^2}/(16\beta^2)$. In \cite{Tertikas}, the authors provide a sharp lower bound for non-convex planar quadrilateral with exactly one non-convex angle. Note that the fact $N=2$, has been crucial in obtaining all these estimates, and it seems to be difficult to extend these results in higher dimensions.

In the following theorem, we provide a lower bound for the weighted Hardy constant $\mathbb{H}_{\al,p}(\Om)$ for a domain $\Gw \subsetneq\R^N$, $N\geq 2$,  satisfying the $(\beta,\infty)$-uniform exterior cone condition.  

\begin{theorem} \label{Thm:main}
Let $\Gw \subsetneq \R^N$ be a domain satisfying the $(\beta,\infty)$-uniform exterior cone condition, where $0<\gb<\pi$. Let $\gb<\gg< \pi$,  and consider the positive $p$-harmonic function $H$ in $\mathrm{int}(\mathcal{C}_{\gg})$ of the form $H(x)=H_\gg(r_x,\theta_x)=r_x^{\la} \Phi(\theta_x)$ ($r_x=|x|$, $\theta_x=\arccos(\braket{\frac{x}{|x|},e_N}$), where $\gl=\la(\gg)>0$ and $\Phi \in C^{\infty}([0,\gg])$ is strictly decreasing with $\Phi(0)=1$, $\Phi(\gg)=0$.  
Suppose that  $|\al|<\gl\Phi(\gb)^{1/\gl}(p-1)$, then  
$$\mathbb{H}_{\al,p}(\Om) \geq \mu_{\al,p,H}(\Om):=\left|\frac{\gl\Phi(\gb)^{1/\gl}(p-1)-|\al|}{p}\right|^p\,. $$
\end{theorem}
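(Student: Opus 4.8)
The plan is to build from $H$ a single positive $p$-superharmonic comparison function $U$ on the whole of $\Om$ and then feed it into Theorem \ref{Thm:easy}. The $(\gb,\infty)$-uniform exterior cone condition supplies, for every $z\in\partial\Om$, a rotation $\Theta$ with $\overline{\Om}\subset\mathcal{C}_\gb(z,\Theta)$. For each such admissible pair $(z,\Theta)$ I set
$$H_{z,\Theta}(x):=H\big(\Theta^{-1}(x-z)\big)=|x-z|^{\gl}\,\Phi(\theta_{x,z}),$$
where $\theta_{x,z}$ is the angle between $\Theta^{-1}(x-z)$ and $e_N$. Since the $p$-Laplacian is invariant under rigid motions, $H_{z,\Theta}$ is positive and $p$-harmonic on $\mathrm{int}(\mathcal{C}_\gg(z,\Theta))$. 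Because $\gb<\gg$ one has $\mathcal{C}_\gb(z,\Theta)\setminus\{z\}\subset\mathrm{int}(\mathcal{C}_\gg(z,\Theta))$, so $\Om\subset\mathrm{int}(\mathcal{C}_\gg(z,\Theta))$ and each $H_{z,\Theta}$ is a positive $p$-harmonic function on all of $\Om$. I then define
$$U(x):=\inf\big\{H_{z,\Theta}(x): z\in\partial\Om,\ \Theta\in SO(N),\ \overline{\Om}\subset\mathcal{C}_\gb(z,\Theta)\big\}.$$

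Two elementary bounds control $U$. Testing with a nearest boundary point $z_0$ (so $|x-z_0|=\delta_{\Om}(x)$, and $\theta_{x,z_0}\le\gb$ because $x\in\mathcal{C}_\gb(z_0,\Theta_0)$) and using $\Phi\le\Phi(0)=1$ gives $U(x)\le\delta_{\Om}(x)^{\gl}$. Conversely, every admissible $z$ satisfies $\theta_{x,z}\le\gb$ and $|x-z|\ge\delta_{\Om}(x)$, so $H_{z,\Theta}(x)\ge\Phi(\gb)\,\delta_{\Om}(x)^{\gl}$, whence $U(x)\ge\Phi(\gb)\,\delta_{\Om}(x)^{\gl}>0$. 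Since $H_{z,\Theta}(x)\ge|x-z|^\gl\Phi(\gb)\to\infty$ as $|z|\to\infty$, the infimum is attained over a compact set of parameters, and the uniform Lipschitz bounds for the $H_{z,\Theta}$ on a neighbourhood of any interior point show that $U$ is locally Lipschitz, hence continuous and differentiable a.e. The point I expect to require the most care is the verification that $U$ is genuinely $p$-superharmonic: being continuous, $U$ equals the lower-semicontinuous regularization of the infimum of the family $\{H_{z,\Theta}\}$ of $p$-superharmonic functions, which is $p$-superharmonic by nonlinear potential theory (the lower envelope of $p$-superharmonic functions is $p$-superharmonic). This is the main obstacle, and the attainment of the infimum on a compact parameter set of continuous $p$-supersolutions is what I intend to use to make it rigorous.

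The gradient estimate is then geometric. Fix $x$ where $U$ is differentiable and let $(z^*,\Theta^*)$ attain the minimum, so $U\le H_{z^*,\Theta^*}$ with equality at $x$; since $H_{z^*,\Theta^*}$ is smooth, the nonnegative difference $H_{z^*,\Theta^*}-U$ has an interior minimum at $x$, forcing $\nabla U(x)=\nabla H_{z^*,\Theta^*}(x)$. Writing $(r,\theta)=(|x-z^*|,\theta_{x,z^*})$, a direct computation from $H=r^{\gl}\Phi(\theta)$ gives
$$\frac{|\nabla U(x)|}{U(x)}=\frac{1}{|x-z^*|}\sqrt{\gl^2+\big(\Phi'(\theta_{x,z^*})/\Phi(\theta_{x,z^*})\big)^2}\ \ge\ \frac{\gl}{|x-z^*|}.$$
Combining the identity $U(x)=|x-z^*|^{\gl}\Phi(\theta_{x,z^*})$ with the upper bound $U(x)\le\delta_{\Om}(x)^{\gl}$ yields $\delta_{\Om}(x)/|x-z^*|\ge\Phi(\theta_{x,z^*})^{1/\gl}\ge\Phi(\gb)^{1/\gl}$, the last inequality using $\theta_{x,z^*}\le\gb$ and the monotonicity of $\Phi$.

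Multiplying these two estimates gives, a.e. in $\Om$,
$$\frac{|\nabla U(x)|}{U(x)}\,\delta_{\Om}(x)\ \ge\ \gl\,\frac{\delta_{\Om}(x)}{|x-z^*|}\ \ge\ \gl\,\Phi(\gb)^{1/\gl}=:\La_{\Om,U}.$$
Thus $U$ is a continuous positive $p$-superharmonic function on $\Om$ with $(|\nabla U|/U)\,\delta_{\Om}\ge\La_{\Om,U}>0$ a.e. Under the standing hypothesis $|\al|<\gl\Phi(\gb)^{1/\gl}(p-1)=\La_{\Om,U}(p-1)$, Theorem \ref{Thm:easy} applies and yields
$$\mathbb{H}_{\al,p}(\Om)\ \ge\ \left|\frac{\La_{\Om,U}(p-1)-|\al|}{p}\right|^p=\mu_{\al,p,H}(\Om),$$
which is the assertion. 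In this scheme all the analytic force is carried by Theorem \ref{Thm:easy}; the present argument reduces to the construction of $U$, the two comparison bounds for $\delta_{\Om}^{\gl}$, and the touching identity for the gradient, with the $p$-superharmonicity of the infimum being the only genuinely delicate step.
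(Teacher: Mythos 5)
Your route is genuinely different from the paper's and, where it works, it is cleaner. The paper localizes to a compact exhaustion $K_n$, covers it by finitely many balls on which $\delta_{\Om}$ is comparable to $|\cdot-Px_i|$ up to a factor $1+\vge$ (Proposition~\ref{Prop:Divya_Cal}), takes the \emph{finite} minimum $u_\vge=\min_i H_{Px_i}$, which is a supersolution by \cite[Theorem~3.23]{HKM}, identifies the active branch a.e.\ via real analyticity and unique continuation (Lemma~\ref{Lem:UCP}), and only at the end removes $\vge$ and the exhaustion. You instead take the global infimum over \emph{all} admissible vertex--rotation pairs, which produces the constant $\gl\Phi(\gb)^{1/\gl}$ directly, with no $\vge$, no covering, and no unique continuation: your touching argument ($\nabla U(x)=\nabla H_{z^*,\Theta^*}(x)$ at a.e.\ point, where $(z^*,\Theta^*)$ attains the infimum) is a clean substitute for Lemma~\ref{Lem:UCP}, and the two comparison bounds $\Phi(\gb)\,\delta_{\Om}^{\gl}\le U\le\delta_{\Om}^{\gl}$ replace Proposition~\ref{Prop:Divya_Cal}. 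The gradient estimate, the distance estimate, and the reduction to Theorem~\ref{Thm:easy} are all correct as written.

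The one step that is not yet a proof is exactly the one you flag: the $p$-superharmonicity of $U$. The principle you invoke --- ``the lower envelope of $p$-superharmonic functions is $p$-superharmonic'' --- is not a theorem for an infinite family: \cite[Theorem~3.23]{HKM} covers only the minimum of two (hence finitely many) supersolutions, and for $p\ne2$ there is no mean-value characterization that would let you pass to an arbitrary infimum in one stroke. To close this you must carry out the reduction you only hint at: (i) near any fixed interior point the infimum is unchanged if restricted to a compact set of parameters (your growth bound $H_{z,\Theta}(x)\ge\Phi(\gb)|x-z|^{\gl}$ together with closedness of the admissible set of pairs); (ii) a countable dense subfamily then has finite minima $v_k$ that are continuous supersolutions decreasing pointwise to $U$; (iii) since $U$ is continuous by your local Lipschitz bound, Dini's theorem upgrades this to locally uniform convergence, and $p$-superharmonicity is stable under locally uniform limits (e.g.\ via the comparison characterization of $p$-superharmonic functions, or the convergence theorems in \cite{HKM}). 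With that paragraph supplied, your argument is complete and yields the stated lower bound.
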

Theorems~\ref{Thm:easy} and \ref{Thm:main} give, in particular, a lower bound for the weighted Hardy constant $\mathbb{H}_{\al,p}(\mathcal{C}_{\beta})$ for circular cones with aperture $\beta \in (0,\pi)$ in any dimension. However, this lower bound is smaller than the known one in the case of $N=p=2$, and $\al=0$ (Remark \ref{Rmk:weak}-$(i)$). 

The organization of the paper is as follows. In section~\ref{sec_pre} we recall and prove some auxiliary results, and in Section~\ref{sec_main} we prove theorems \ref{Thm:easy} and \ref{Thm:main}, and some corollaries.
\medskip

Throughout the paper, we use the following notation and conventions:
\begin{itemize}

	\item We write $A_1 \Subset A_2$ if  $\overline{A_1}$ is a compact set, and $\overline{A_1}\subset A_2$.
	\item $C$ refers to  a positive constant which may vary from  line to line.
	\item We denote  $\De_{\al,p}(u):={\rm div} \left(\gd_\Gw^{-\ga} |\nabla u|^{p-2}\nabla u\right)$, called the $(\ga,p)$-Laplacian.
\end{itemize} 

\section{Preliminaries}\label{sec_pre}
First, we recall the Agmon-Allegretto-Piepenbrink (AAP)-type theorem (see \cite[Theorem~4.3]{Yehuda_Georgios} and the references therein) stated for our particular case.
\begin{theorem}[AAP-type theorem] \label{aap_thm} 
For any $\mu \in \R$,	the following assertions are equivalent:
	\begin{itemize}
		\item[(i)] The functional $\mathcal{Q}_{\ga,p,\gm}(\vgf)\!\!:=\!\!\displaystyle\int_{\Om}\!\! \left(|\nabla \varphi|^p \delta_{\Om}^{-\al} \! -\!  \mu |\varphi|^p  \delta_{\Om}^{-(\al+p)} \!\right)\!\dx$ is nonnegative on $C_c^\infty(\Omega)$.
		\item[(ii)] The equation $-\De_{\al,p} (u) - \frac{\mu}{\delta_{\Om}^{\al+p}}|u|^{p-2}u=0$ in $\Omega$ admits a positive (weak) solution.
		\item[(iii)] The equation $-\De_{\al,p} (u) - \frac{\mu}{\delta_{\Om}^{\al+p}}|u|^{p-2}u=0$ in $\Omega$ admits a positive (weak) supersolution.
	\end{itemize}		
\end{theorem}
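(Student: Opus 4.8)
\textbf{Proof strategy for Theorem~\ref{Thm:main}.}
The plan is to apply Theorem~\ref{Thm:easy} with a carefully chosen positive $p$-superharmonic function $U$ on $\Om$ built from the model $p$-harmonic function $H_\gg$ on the wider cone $\mathcal{C}_\gg$. The key idea is that the $(\beta,\infty)$-uniform exterior cone condition lets us dominate $\Om$ near each boundary point by a rotated cone of aperture $\beta$, and since $\gb<\gg$, the function $H_\gg$ (which vanishes on $\partial\mathcal{C}_\gg$) restricted to the narrower region $\mathcal{C}_\gb$ stays bounded away from zero relative to its radial growth. First I would define, for each boundary point $z\in\partial\Om$ with associated rotation $\Theta$, the translated-rotated function $U_z(x):=H_\gg\big(\Theta^{-1}(x-z)\big)$, which is positive and $p$-harmonic (hence $p$-superharmonic) on $\mathrm{int}(\mathcal{C}_\gg(z,\Theta))\supset\Om$; the candidate superharmonic function on $\Om$ would be obtained by taking an appropriate combination (e.g.\ an infimum or a fixed representative via the cone realizing the distance) of these $U_z$, using that the minimum of $p$-superharmonic functions is $p$-superharmonic.

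The heart of the argument is to verify the pointwise gradient bound $\frac{|\nabla U|}{U}\,\delta_\Om(x)\ge \La_{\Om,U}$ with the explicit constant $\La_{\Om,U}=\gl\,\Phi(\gb)^{1/\gl}$, so that Theorem~\ref{Thm:easy} yields exactly $\mathbb{H}_{\al,p}(\Om)\ge\big|\tfrac{\gl\Phi(\gb)^{1/\gl}(p-1)-|\al|}{p}\big|^p$. For a single cone function $U_z$ in polar coordinates $(r,\theta)$ one computes
\begin{equation*}
\frac{|\nabla H_\gg|}{H_\gg}
=\frac{\sqrt{(\la r^{\la-1}\Phi)^2+(r^{\la-1}\Phi')^2}}{r^{\la}\Phi}
=\frac{1}{r}\,\frac{\sqrt{\la^2\Phi^2+(\Phi')^2}}{\Phi}
\ge \frac{\la}{r}\,,
\end{equation*}
and I would then relate $r=|x-z|$ to the boundary distance $\delta_\Om(x)$. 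The subtle point is that $\delta_\Om(x)$ is controlled by the distance from $x$ to the vertex $z$ and the angular position $\theta_x$ inside the cone: for $x\in\mathcal{C}_\gb(z,\Theta)$ one has $\delta_{\mathcal{C}_\gg}(x)\ge |x-z|\,\sin(\gg-\theta_x)$-type estimates, and since $\theta_x\le\gb<\gg$ the worst case is $\theta_x=\gb$. Tracking the constant through the explicit form of $\Phi$ — in particular relating $\sin(\gg-\gb)$ or the level-set geometry to the value $\Phi(\gb)^{1/\gl}$ — is where the precise factor $\Phi(\gb)^{1/\gl}$ must emerge; I expect this geometric-to-analytic bookkeeping to be the main obstacle.

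The remaining steps are comparatively routine. I would check that $U$ is continuous and positive on all of $\Om$ (following from positivity of each $H_\gg$ on $\mathrm{int}(\mathcal{C}_\gg)$ and the inclusion $\overline{\Om}\cap B_{r_0}(z)\subset\mathcal{C}_\gb(z,\Theta)$ with $r_0=\infty$), and that taking a minimum or using a single realizing cone preserves both $p$-superharmonicity and the gradient bound a.e.\ (the bound being a pointwise inequality stable under minima). With $\La_{\Om,U}=\gl\Phi(\gb)^{1/\gl}$ established and the hypothesis $|\al|<\gl\Phi(\gb)^{1/\gl}(p-1)$ guaranteeing $\La_{\Om,U}(p-1)>|\al|$, the conclusion follows immediately by invoking Theorem~\ref{Thm:easy}. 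I would close by noting that the existence and stated properties of $H_\gg$ (the eigenvalue-type problem determining $\la(\gg)$ and the smooth strictly decreasing profile $\Phi$) are standard for $p$-harmonic functions on circular cones and are taken as given in the statement.
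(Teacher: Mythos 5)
Your proposal does not address the statement at hand. The statement is Theorem~\ref{aap_thm}, the AAP-type theorem asserting the equivalence of (i) nonnegativity of the functional $\mathcal{Q}_{\al,p,\mu}$ on $C_c^\infty(\Om)$, (ii) existence of a positive weak solution, and (iii) existence of a positive weak supersolution of $-\De_{\al,p}(u)-\mu\,\delta_{\Om}^{-(\al+p)}|u|^{p-2}u=0$. What you have written is instead a strategy for Theorem~\ref{Thm:main}, the lower bound for $\mathbb{H}_{\al,p}(\Om)$ under the uniform exterior cone condition. None of your steps --- the construction of $U$ from translated cone functions $H_{\gg}$, the gradient estimate $\frac{|\nabla U|}{U}\delta_{\Om}\geq \gl\Phi(\gb)^{1/\gl}$, the invocation of Theorem~\ref{Thm:easy} --- bears on the equivalence (i)$\Leftrightarrow$(ii)$\Leftrightarrow$(iii). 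Worse, the logical dependence runs in the opposite direction: the proof of Theorem~\ref{Thm:easy} (and hence of Theorem~\ref{Thm:main}) \emph{uses} Theorem~\ref{aap_thm} twice, first to pass from a supersolution to a solution on each member of an exhaustion, and then to convert a global positive solution into the Hardy inequality. So taking your route here would be circular.

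For the record, the paper does not prove Theorem~\ref{aap_thm} at all; it is quoted from the literature (Theorem~4.3 of the Pinchover--Psaradakis paper cited as \cite{Yehuda_Georgios}), applied with the weight $\delta_{\Om}^{-\al}$ and potential $\mu\,\delta_{\Om}^{-(\al+p)}$, both of which are locally bounded and locally bounded away from zero (in the case of the weight) on compact subsets of $\Om$, so the $(p,A)$-Laplacian framework of that reference applies. Had you wished to argue it directly, the standard scheme is: (ii)$\Rightarrow$(iii) is trivial; (iii)$\Rightarrow$(i) follows from a generalized Picone identity (or the accompanying convexity inequality), testing against $|\vgf|^p/u^{p-1}$ with $u$ the supersolution; and (i)$\Rightarrow$(ii) is obtained by solving suitable problems on a smooth exhaustion $\{K_n\}$ and extracting a locally uniformly convergent subsequence via the Harnack convergence principle. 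Any acceptable answer needed either that argument or the citation with verification of its hypotheses; your proposal contains neither.
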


Let $\Gw \subsetneq \R^N$ be a domain, and $\vge>0$.  Let 
$$\Gw_\vge:= \{x\in \Om \mid \delta_{\Om}(x)>\vge\}.$$
 For $\beta \in (0,\pi]$, there exists a positive $p$-harmonic function  in $\mathrm{int}(\mathcal{C}_{\gb})$ of the form $H(x)=H_{\beta}(r_x,\theta_x)=r_x^{\la} \Phi(\theta_x)$ ($r_x=|x|$, $\theta_x=\arccos(\braket{\frac{x}{|x|},e_N}$), where $\gl= \la(\beta)\geq 0$ and $\Phi \in C^{\infty}([0,\beta])$ is such that $\Phi(0)=1$, $\Phi(\gb)=0$, and $\Phi$ is strictly decreasing, see \cite{ALV,Veron}. In fact, there exists a unique positive $p$-harmonic function  in $\mathrm{int}(\mathcal{C}_{\beta})$ of the above form which vanishes on the boundary of the cone. Moreover, 
$\la(\pi)> 0$ if and only if $p+1>N$ \cite{ALV}. 
\begin{remark}  \label{Trans_harmonic} \rm
For $y \in \R^N$ and a rotation  $\Theta$, consider the closed cone $\mathcal{C}_{\gb}(y,\Theta)$. Using the translation invariance, it follows that there exists a positive $p$-harmonic function in $\mathrm{int}(\mathcal{C}_{\gb}(y,\Theta))$ of the form 
	\begin{equation}\label{eq_Hy}
	H_{y}(x)\!=\!|x-y|^{\la} \Phi(\theta_{y,\Theta, x})\,,
	\end{equation}
	where $\theta_{y,\Theta, x}=\arccos(\braket{\frac{x-y}{|x-y|},\Theta(e_N)}$ and $\la=\la(\beta)>0$ is the same as for the $p$-harmonic function $H=H_{\beta}$ in $\mathcal{C}_{\beta}$.
\end{remark}
Let $\Om$ be a domain in $\R^N$ satisfying the $(\beta,\infty)$-uniform exterior cone condition with $0<\gb<\pi$. Fix $\gb<\gg < \pi$, and let $H$ be the positive $p$-harmonic function  in $\mathrm{int}(\mathcal{C}_{\gg})$ of the form $H(x)=H_{\gamma}(r_x,\theta_x)=r_x^{\la} \Phi(\theta_x)$ ($r_x=|x|$, $\theta_x=\arccos(\braket{\frac{x}{|x|},e_N}$), where $\gl=\la(\gg)>0$ and $\Phi \in C^{\infty}([0,\gg])$ is such that $\Phi(0)=1$, $\Phi(\gamma)=0$, and $\Phi$ is strictly decreasing. Note that $\Phi(\gb)>0$. Then, for every $z \in \partial \Om$, \eqref{UCC} is satisfied. By Remark \ref{Trans_harmonic}, there exists a positive $p$-harmonic function $H_z(x)\!=\!|x-z|^{\la} \Phi(\theta_{z,\Theta,x})$ in $\Om\!\subset\! \mathrm{int}(\mathcal{C}_{\gg}(z,\Theta_z))$ satisfying \eqref{eq_Hy} with $y\!=\!z$. 
\begin{lemma} \label{Lem:UCP}
Let $\Gw \subsetneq \R^N$ be a domain satisfying the  $(\beta,\infty)$-uniform exterior cone condition with $\beta \in (0,\pi]$ if $p+1>N$ and $\beta \in (0,\pi)$ if $p+1\leq N$. For $y, z \in \partial \Om$, let $H_y, H_z$ be two positive $p$-harmonic functions  in $\Om$ of the form \eqref{eq_Hy}. If $H_y=H_z$ on a positive Lebesgue measurable set in $\Om$, then $H_y=H_z$ in $\Om$.
\end{lemma}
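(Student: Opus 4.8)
The plan is to reduce the statement to a unique continuation property for real-analytic functions. Since $\Om$ is a domain, it is open and connected; hence it suffices to prove that $w:=H_y-H_z$ is real-analytic on $\Om$, because the zero set of a real-analytic function on a connected open set is either all of $\Om$ or a set of Lebesgue measure zero. Consequently, if $w$ vanishes on a set of positive measure, then necessarily $w\equiv 0$ on $\Om$, which is exactly the assertion $H_y=H_z$ in $\Om$. So everything reduces to showing that $H_y$ and $H_z$ are real-analytic on $\Om$.

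First I would observe that since $y,z\in\partial\Om$, we have $y,z\notin\Om$, so $|x-y|>0$ and $|x-z|>0$ throughout $\Om$; hence the vertices, which are the only singularities of the explicit expression \eqref{eq_Hy}, lie outside $\Om$. Next, using the separable form $H_y=r^{\la}\Phi(\theta)$ (writing $r,\theta$ for the polar coordinates centered at $y$ with axis $\Theta(e_N)$, so that $H_y$ is independent of the azimuthal angles), a direct computation gives
$$|\nabla H_y|^2 = r^{2(\la-1)}\big(\la^2\Phi(\theta)^2+\Phi'(\theta)^2\big).$$
Because $\la=\la(\beta)>0$ (recall $\la(\pi)>0$ iff $p+1>N$, which is precisely the dimensional restriction in the hypotheses guaranteeing $\la>0$ for the admissible apertures), and because positivity of $H_y$ on $\Om$ forces $\theta<\beta$ so that $\Phi(\theta)>0$ there, the factor $\la^2\Phi(\theta)^2$ is strictly positive on $\Om$. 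Thus $\nabla H_y\neq 0$ everywhere on $\Om$; on the axis $\theta=0$ one has $\Phi'(0)=0$, but $\la^2\Phi(0)^2=\la^2>0$, so there is no degeneration there either. The same holds for $H_z$.

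Now I would invoke the analytic regularity of $p$-harmonic functions at noncritical points: where $\nabla u\neq 0$, the equation $\De_p u=0$ is a quasilinear equation whose principal part $a_{ij}(\nabla u)=|\nabla u|^{p-2}\big(\delta_{ij}+(p-2)\tfrac{\partial_i u\,\partial_j u}{|\nabla u|^2}\big)$ is uniformly elliptic with coefficients depending real-analytically on $\nabla u$; by Morrey's analyticity theorem for nonlinear elliptic equations, any $C^{1,\alpha}$ solution with nonvanishing gradient is real-analytic. Applying this on $\Om$, where we have just checked that $\nabla H_y$ and $\nabla H_z$ never vanish, shows that $H_y,H_z$, and hence $w$, are real-analytic on $\Om$, which completes the argument.

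The main obstacle is the analyticity step, and within it the verification that the gradient does not vanish anywhere on $\Om$, including on the cone's axis where the angular coordinate and the map $x\mapsto\theta$ degenerate. This is handled by the identity above together with the standard properties $\Phi(0)=1$ and $\Phi'(0)=0$: the positivity of $\la$, guaranteed by the hypotheses, is exactly what makes the radial part of the gradient dominate and stay bounded away from zero on compact subsets, so that $H_y,H_z$ fall within the noncritical regime where the analytic regularity theory applies.
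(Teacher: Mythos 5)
Your proof is correct and follows essentially the same route as the paper's: verify that $\nabla H_y$ and $\nabla H_z$ are bounded away from zero on compact subsets of $\Om$ (via the explicit formula $|\nabla H_y|^2=r^{2(\la-1)}(\la^2\Phi^2+\Phi'^2)$, equivalently the homogeneity estimate $|\nabla H_y|\geq \la r^{\la-1}\Phi$ used in the paper), invoke real-analyticity of $p$-harmonic functions away from critical points (the paper cites Lewis, you cite Morrey --- same content), and conclude by the identity principle for real-analytic functions agreeing on a set of positive measure (which the paper proves by Fubini induction in its appendix and you quote as standard).
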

\begin{proof}
 Since $H_y,H_z$ are homogeneous function of degree $\gl>0$, it follows that  $\nabla H_y$ and $\nabla H_z $ do not  vanish in $\Om$. For instance, due to the homogeneity of $H_{y}$, we have \begin{align*} 
|\nabla H_{y}(x)|\geq \gl  |x-y|^{\gl-1} \Phi(\theta_{y,\Theta,x}) \ \ \mbox{in} \ 
\Om\,,
\end{align*}
and analogous estimate holds for $H_z$. Hence, for any $K \Subset \Om$, there exists $C_K>0$ such that $|\nabla H_y|, |\nabla H_z|\geq C_K>0$. Thus, the $p$-harmonic functions $H_y,H_z$ are real analytic in $\Om$ (see for example \cite[Theorem~1]{Lewis2}). By our assumption $H_y,H_z$ agree on a positive Lebesgue measurable set in $\Om$, therefore, $H_y=H_z$ in $\Gw$, see Appendix~\ref{app1} for a proof.
\end{proof}
\begin{proposition} \label{Prop:Divya_Cal}
Let $\Gw \subsetneq \R^N$ be a domain. For $x\in \Gw$, denote by $Px$ a point on $\partial \Gw$ such that  $\delta_{\Om}(x)=|x-Px|$. Then, for any $\varepsilon >0$ and $x \in \Om_{\varepsilon}$, there exists $\gt=\gt(\varepsilon,x)>0$ such that
\begin{align*}
\delta_{\Om}(y) \leq |y-Px| \leq (1+\varepsilon) \delta_{\Om}(y)   \qquad \forall y \in B_\gt(x) .  
\end{align*}
\end{proposition}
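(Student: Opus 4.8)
The plan is to reduce both inequalities to elementary distance estimates, the key tools being that $\delta_{\Om}$ is $1$-Lipschitz and that $Px$ is a \emph{fixed} boundary point once $x$ is fixed, so that no continuity of the (possibly non-unique) nearest-point projection is ever needed.

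The left inequality is immediate and unconditional: since $Px\in\partial\Om$ and $\delta_{\Om}(y)=\mathrm{dist}(y,\partial\Om)$ is the infimum of $|y-\zeta|$ over $\zeta\in\partial\Om$, we get $\delta_{\Om}(y)\le |y-Px|$ for every $y\in\R^N$. So this half costs nothing. For the right inequality I set $d:=\delta_{\Om}(x)=|x-Px|$ and note $d>\varepsilon>0$ because $x\in\Om_{\varepsilon}$. I would then bound the two sides of $|y-Px|\le(1+\varepsilon)\delta_{\Om}(y)$ separately for $y\in B_\gt(x)$. The numerator is controlled by the triangle inequality, $|y-Px|\le |y-x|+|x-Px|<\gt+d$. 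The denominator is bounded below by the $1$-Lipschitz continuity of the distance function, $|\delta_{\Om}(y)-\delta_{\Om}(x)|\le |y-x|$, giving $\delta_{\Om}(y)\ge d-\gt$. In particular, as soon as $\gt<d$ every $y\in B_\gt(x)$ has $\delta_{\Om}(y)>0$, hence $B_\gt(x)\subset\Om$ and the interior distance is genuinely positive there.

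Combining the two bounds, the estimate $|y-Px|\le(1+\varepsilon)\delta_{\Om}(y)$ holds on $B_\gt(x)$ once $\gt+d\le(1+\varepsilon)(d-\gt)$, i.e. once $(2+\varepsilon)\gt\le\varepsilon d$. I would therefore simply take any
$$\gt=\gt(\varepsilon,x)\in\left(0,\ \frac{\varepsilon\,\delta_{\Om}(x)}{2+\varepsilon}\right],$$
which is a strictly positive number precisely because $\delta_{\Om}(x)>0$. With this choice both inequalities hold simultaneously for all $y\in B_\gt(x)$, completing the argument.

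The computation is entirely routine; the only point that deserves attention — the closest thing to an obstacle — is the possible non-uniqueness of the nearest boundary point $Px$. This is harmless here because $Px$ is chosen and frozen at the single point $x$, and the argument never requires the nearest point of a neighboring $y$ to be close to $Px$: it only compares $y$ against the one fixed point $Px$. The role of the hypothesis $x\in\Om_{\varepsilon}$ is merely to ensure $\delta_{\Om}(x)>0$, so that the admissible threshold for $\gt$ is strictly positive.
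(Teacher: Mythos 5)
Your proof is correct and follows essentially the same route as the paper: the left inequality is the trivial infimum bound, and the right one combines the triangle inequality $|y-Px|\le \gt+\delta_{\Om}(x)$ with the $1$-Lipschitz lower bound $\delta_{\Om}(y)\ge \delta_{\Om}(x)-\gt$; your choice $\gt\le \varepsilon\,\delta_{\Om}(x)/(2+\varepsilon)$ is exactly the paper's choice $\gt=t\,\delta_{\Om}(x)$ with $2t/(1-t)\le\varepsilon$. Your closing observation that only $\delta_{\Om}(x)>0$ is really used (so the restriction to $\Om_{\varepsilon}$ is not essential) is also accurate.
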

\begin{proof}
The first inequality holds trivially. For the second one, let $x \in \Om_{\varepsilon}$ and fix $Px \in \partial \Om$ such that  $\delta_{\Om}(x)=|x-Px|$. Note that $Px$ might not be unique.  Choose $\tau=t \delta_{\Om}(x)$  for some $0\!<\!t\!<\!1$ with $0\!<\!\frac{2t}{1-t} \!<\!\varepsilon$ such that $B_{\tau}(x) \!\subset\! \Om_{\varepsilon}$.
By the triangle inequality, for any $y \!\in\! B_{\tau}(x)$,
\begin{align} \label{eqn:1}
    |y-Px| \leq \delta_{\Om}(x) +\tau = (1+ t)\delta_{\Om}(x)   \,.
\end{align}
Also, notice that $\delta_{\Om}(x)-\tau\leq \delta_{\Om}(y)$, hence, 
$(1-t) \delta_{\Om}(x)\leq \delta_{\Om}(y)$.

Since $\frac{2t}{1-t}<\varepsilon$, it follow from \eqref{eqn:1} and the above inequality that 
$$\hspace{50mm} |y-Px| \leq \frac{1+t}{1-t} \delta_{\Om}(y) \leq (1+\varepsilon) \delta_{\Om}(y) \qquad \forall y \in B_{\tau}(x).   \hspace{10mm}\qedhere$$
\end{proof}

\section{Proof of the main results}\label{sec_main}
In this section, we prove theorems \ref{Thm:easy} and \ref{Thm:main}, and some of their corollaries. 
\begin{proof}[Proof of Theorem~\ref{Thm:easy}] 
Let $U\in C(\Gw)$ be a positive $p$-superharmonic function  in $\Om$ satisfying $\frac{|\nabla U|}{U} \delta_{\Om} \geq \La_{\Om,U}>0$ a.e. in $\Om$. Let $\{ K_n \}_{n \in \N}$ be a smooth exhaustion of $\Om$ such that $K_n \Subset K_{n+1}\Subset \Gw$.  By the AAP theorem (Theorem~\ref{aap_thm}), the existence of a positive supersolution of
\begin{align} \label{eq_pmu}
-\De_{\al,p} (u) - \frac{\mu}{\delta_{\Om}^{\al+p}}|u|^{p-2}u =0
\end{align} 
in a domain $D$ implies the existence of a positive solution of \eqref{eq_pmu} in $D$. Furthermore, using the Harnack convergence principle (\cite[Proposition~2.11]{Yehuda_Georgios}), it follows that the existence of positive solutions of \eqref{eq_pmu} in $K_{n}$ for any  $\mu<\mu_{\al,p,U}(\Om):=
 \left|\frac{-(|\al|/\La_{\Om,U})+p-1}{p}\right|^p \La_{\Om,U}^p$ and $n\in \N$, implies the existence of a positive solution of \eqref{eq_pmu}  in $\Gw$ with $\gm=\mu_{\al,p,U}(\Om)$, which by the AAP theorem (Theorem~\ref{aap_thm}), implies the weighted Hardy inequality in $\Gw$  with the constant $\mu_{\al,p,U}(\Om)$. 
Therefore, it is enough to show that \eqref{eq_pmu} 
admits a positive supersolution in $K_n$ for any $\mu < \mu_{\al,p,U}(\Om)$. 

 Since $U$ is continuous in $\overline{K_n}$, and $\La_{\Om,U}$ does not depend on the normalization of $U$, we may assume $U\ll1$ to be small enough in $K_n$.  Choose $\nu < \eta$, such that   $\nu , \eta \in (\frac{\hat{\al}+p-1}{p},\frac{\hat{\al}+p-1}{p-1}]$, where $\hat{\al}=-\frac{|\al|}{\La_{\Om,U} }$ (note that $\hat{\al}+p>1$).  For $a=\nu,\eta$, let $\la_{a}:=|a|^{p-2}a[\hat{\al}+(p-1)(1-a)]$. Note that $\gl_a \geq 0$ and $a\mapsto \la_{a}$ is strictly decreasing in $[\frac{\hat{\al}+p-1}{p},\frac{\hat{\al}+p-1}{p-1}]$.   

Now, using the supersolution construction and Agmon's trick, it follows that $U^{\nu}-U^{\eta}$
is a positive supersolution of the equation $-\De_{\al,p} (u) - \frac{\la_{\nu}\La_{\Om,U}^p }{\delta_{\Om}^{\al+p}}|u|^{p-2}u =0$  in  $K_n$.

 Indeed, by the chain rule \cite[Lemma~2.10]{DDP}, it follows that
\begin{align*}
&  -\De_{\al,p}(U^{\nu}-U^{\eta}) 
 \!=\! -|\nu U^{\nu -1}-\eta U^{\eta -1}|^{p-2} \bigg[(p-1)(\nu (\nu -1)U^{\nu -2}-\eta (\eta -1)U^{\eta -2} ) |\nabla U|^{2}  \\
 & \qquad \qquad \qquad \qquad \qquad \qquad \qquad \qquad - \al  (\nu U^{\nu -1}-\eta U^{\eta -1}) \bigg(\frac{\nabla U \cdot \nabla \delta_{\Om}}{\delta_{\Om}}\bigg) \bigg] \delta_{\Om}^{-\al} |\nabla U|^{p-2}  \\
& = U^{\nu (p-1)} |\nu - \eta U^{\eta -\nu}|^{p-2} \bigg[ (p-1)(\nu(1-\nu)-\eta(1-\eta)U^{\eta - \nu})  \\
 & \qquad \qquad \qquad \qquad \qquad \qquad \qquad \qquad + \al  (\nu -\eta U^{\eta -\nu}) \bigg(\frac{U}{|\nabla U|^2} \frac{\nabla U \cdot \nabla \delta_{\Om}}{\delta_{\Om}}\bigg) \bigg] \delta_{\Om}^{-\al} \frac{|\nabla U|^{p}}{U^p}.
 \end{align*}
 Hence, in $K_n$ we have
 \begin{align*}
&-\De_{\al,p}(U^{\nu}-U^{\eta}) \geq U^{\nu (p-1)} |\nu - \eta U^{\eta -\nu}|^{p-2} \bigg[ (p-1)(\nu(1-\nu)-\eta(1-\eta)U^{\eta - \nu})  \\
 &  \qquad \qquad \qquad \qquad - |\al|  (\nu -\eta U^{\eta -\nu}) \bigg(\frac{U}{|\nabla U|\delta_{\Om}} \bigg) \bigg] \delta_{\Om}^{-(\al+p)} \frac{|\nabla U|^{p}\delta_{\Om}^p}{U^p}\\
& \geq \! U^{\nu (p-1)} |\nu \!-\! \eta U^{\eta -\nu}|^{p-2} \bigg[\! (p\!-\!1)(\nu(1-\nu)-\eta(1-\eta)U^{\eta - \nu}) 
  - \frac{|\al|}{\La_{\Om,U}}  (\nu -\eta U^{\eta -\nu})  \bigg] \delta_{\Om}^{-(\al+p)} \La_{\Om,U}^p \\
 & = U^{\nu (p-1)} |\nu - \eta U^{\eta -\nu}|^{p-2} \left[ \frac{\la_{\nu}}{|\nu|^{p-2}} - \frac{\la_{\eta}}{|\eta|^{p-2}} U^{\eta - \nu}  \right]    \La_{\Om,U}^p\delta_{\Om}^{-(\al+p)} . 
\end{align*}
 Note that the first inequality uses the fact that $U\ll 1$ on $\overline{K_n}$.
Thus, in order to guarantee that $U^{\nu}-U^{\eta}$
is a positive suprsolution of the equation $-\De_{\al,p} (u) - \frac{\la_{\nu}\La_{\Om,U}^p}{\delta_{\Om}^{\al+p}}|u|^{p-2}u =0$  in $K_n$, it
suffices to impose the condition
\begin{align*}
U^{\nu (p-1)} |\nu - \eta U^{\eta -\nu}|^{p-2} \left[ \frac{\la_{\nu}}{|\nu|^{p-2}} - \frac{\la_{\eta}}{|\eta|^{p-2}} U^{\eta - \nu}  \right]    \geq \la_{\nu} (U^{\nu}-U^{\eta})^{p-1} 
\end{align*}
in $K_n$.  The latter inequality can be written as
\begin{align*} 
|\nu \!- \! \eta U^{\eta -\nu}|^{p-2} \! \! \left[ \frac{\la_{\nu}}{|\nu|^{p-2}} \!-\! \frac{\la_{\eta}}{|\eta|^{p-2}} U^{\eta - \nu}  \right]    \geq  \la_{\nu} (1-U^{\eta - \nu})^{p-1} \,.
\end{align*}
Expanding both sides we arrive at 
\begin{align*} 
[\la_{\nu}-AU^{\eta - \nu} +o(U^{\eta-\nu})  ]  \geq  \la_{\nu} (1 - (p-1)U^{\eta -\nu}+ o(U^{\eta -\nu})) \,,
\end{align*}
where $$A=(p-2)\frac{\la_{\nu} \eta}{\nu} + \la_{\eta} \frac{|\nu|^{p-2}}{|\eta|^{p-2}} \,. $$
 By a direct computation and the condition $0<\frac{\hat{\al}+p-1}{p} < \nu < \eta$,
one verifies that $A< \la_{\nu}(p-1)$, see Proposition~\ref{Prop:computation} in Appendix~\ref{app2}.
Therefore, $U^{\nu}-U^{\eta}$ is a positive supersolution of $-\De_{\al,p} (u) - \frac{\la_{\nu}\La_{\Om,U}^p }{\delta_{\Om}^{\al+p}}|u|^{p-2}u =0$ in $K_n$.  On the other hand,  $\la_{\nu}$ attains its maximum value $\left| \frac{\hat{\al}+p-1}{p}\right|^p$ at $\nu=\frac{\hat{\al}+p-1}{p}$. Thus, by taking $\nu \searrow \frac{\hat{\al}+p-1}{p}$, we obtain 
$$\mathbb{H}_{\al,p}(\Om) \geq \mu_{\al,p,U}(\Om)=
 \left|\frac{\La_{\Om,U}^p(p-1)-|\al|}{p}\right|^p .$$

By definition, if $\Gw$ is a mean convex domain, then $U=\delta_{\Om}$ is a positive $p$-superharmonic function, and  clearly $\La_{\Om,U}=1$. Hence, by the first part of Theorem~\ref{Thm:easy}, if $1-p<\al \leq 0$, then $\mathbb{H}_{\al,p}(\Om) \geq |(\al+p-1)/p|^p$. On the other hand, it is known (see \cite{Balinsky}) that if $\al+p-1>0$, then for mean convex domains $\mathbb{H}_{\al,p}(\Om) = |(\al+p-1)/p|^p$ . Therefore, the lower-bound is sharp. 
\end{proof} 

\begin{remark} \label{Rmk:Pinchover_Divya} \rm 
Let $\Om \subsetneq \R^N$ be {\em any} domain, and assume that $N-p<\ga \leq 0$.  Following the proof of \cite[Theorem~1.1 (taking $\al=0$)]{GPP}, we can construct a $p$-superharmonic function $U_{\varepsilon}$ in $\Om_{\varepsilon}$ satisfying
$$\frac{|\nabla U_{\varepsilon}| \delta_{\Om}}{U_{\varepsilon}} \geq \frac{1}{1+\varepsilon}\left[\frac{p-N}{p-1}\right] \ \ \mbox{a.e. in} \ \Om_{\varepsilon}$$
for every $\varepsilon>0$. Thus, using the above theorem, we obtain a modified proof \cite[Theorem 1.1]{GPP} that $\mathbb{H}_{\al,p}(\Om) \geq \left[\frac{\al+p-N}{p}\right]^p$ for $N-p<\ga \leq 0$.  Note that the equality occurs for $\ga=0$ and $\Gw$ is a punctured domain in $\R^N$.

\end{remark}
\begin{remark}{\em
Let $p=2$ and $\al=0$. For $0<\gb\leq 2\pi$ consider the sectorial region  
$$\mathcal{S}_\beta:= \{r \mathrm{e}^{\mathrm{i}\theta}  \mid 0<r<1, \  0<\theta< \gb\}\subset \R^2, \ \mbox{ where } {\mathrm{i}}^2=-1 .$$
In \cite[Theorem~4.1]{Davies} Davies  showed that there exists $\beta_{cr} \in (\pi, 2\pi)$ such that $\mathbb{H}_{0,2}(\mathcal{S}_\beta)={1}/{4}$ for all $\beta \leq \beta_{cr}$ and $\mathbb{H}_{0,2}(\mathcal{S}_\beta)$ decreases with $\beta$ for $\beta \geq \beta_{cr}$, and in the limiting case, $\mathbb{H}_{0,2}(\mathcal{S}_{2\pi})>0$, see also Remark \ref{Rmk:weak}. Note that $\mathcal{S}_{2\pi}$ satisfies the  $(\pi,\infty)$-uniform exterior cone condition. This shows again that, even being a non-Lipschitz  domain in $\R^2$, $\mathcal{S}_{2\pi}$ satisfies the $L^{0,2}$-Hardy inequality (as $\mathbb{H}_{0,2}(\mathcal{S}_{2\pi})>0$). As mentioned in the introduction, one can also conclude this, using either Ancona's result \cite{Ancona} or Laptev-Sobolev's result \cite{LS}. 
}
\end{remark}
As a simple consequence of Theorem~\ref{Thm:easy}, we obtain:
\begin{corollary} \label{Cor:easy}
Let $p\!=\!N$ and $\Gw= \mathcal{C}_{\pi}$ be the cone in $\R^N$ with aperture $\pi$. Then $\mathbb{H}_{0,p}(\mathcal{C}_{\pi})>0$.
\end{corollary}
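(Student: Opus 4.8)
The plan is to apply Theorem~\ref{Thm:easy} with $\al=0$ to $\Om=\mathcal{C}_\pi$, using as the positive $p$-superharmonic function the explicit $p$-harmonic function $H$ attached to the cone of aperture $\pi$. Since $p=N$ gives $p+1=N+1>N$, the results quoted in Section~\ref{sec_pre} (\cite{ALV,Veron}) guarantee a positive $p$-harmonic $H(x)=r_x^{\la}\Phi(\theta_x)$ on $\mathrm{int}(\mathcal{C}_\pi)$ with $\la=\la(\pi)>0$, $\Phi\in C^\infty([0,\pi))$ strictly decreasing, $\Phi(0)=1$ and $\Phi(\theta)\to 0$ as $\theta\to\pi$. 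As a point set $\mathcal{C}_\pi=\R^N$, so the domain that matters is $\Om=\mathrm{int}(\mathcal{C}_\pi)=\R^N\setminus\{-te_N:t\ge 0\}$, i.e. $\R^N$ with the closed ray on which $H$ vanishes removed; up to a rotation this is the slit domain $\R^N\setminus S$ of the introduction. Everything then reduces to exhibiting one constant $\La>0$ with $\tfrac{|\nabla H|}{H}\,\gd_\Om\ge\La$ a.e.\ in $\Om$: the hypothesis $\La(p-1)>|\al|=0$ of Theorem~\ref{Thm:easy} is automatic, and the conclusion reads $\mathbb{H}_{0,N}(\mathcal{C}_\pi)\ge\big(\La(N-1)/N\big)^N>0$.

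First I would record the geometry and compute the quotient. The boundary $\partial\Om$ is precisely the removed ray, and an elementary distance computation gives $\gd_\Om(x)=r_x\sin\theta_x$ when $\theta_x\in[\pi/2,\pi)$ and $\gd_\Om(x)=r_x$ when $\theta_x\in[0,\pi/2]$. In spherical coordinates $|\nabla H|=r^{\la-1}\sqrt{\la^2\Phi^2+(\Phi')^2}$ (and $\nabla H\ne 0$ throughout $\Om$ by homogeneity, as in the proof of Lemma~\ref{Lem:UCP}), so
\[
\frac{|\nabla H|}{H}\,\gd_\Om=\frac{\gd_\Om}{r}\sqrt{\la^2+(\Phi'/\Phi)^2}.
\]
On $\{\theta\le\pi/2\}$ one has $\gd_\Om/r=1$ and the square root is $\ge\la$, so the quotient is $\ge\la$; on each slab $\{\pi/2\le\theta\le\pi-\ep\}$ the same bound together with $\sin\theta\ge\sin\ep$ gives $\ge\la\sin\ep$. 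Thus the only delicate region is $\theta\to\pi$, where $\gd_\Om/r=\sin\theta\to 0$ and one must show the blow-up of $|\Phi'/\Phi|$ compensates.

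The hard part — and the step I expect to be the main obstacle — is the boundary behaviour of $\Phi$ at the slit. I would extract it from the singular ODE that $\Phi$ satisfies (obtained by expanding $\Delta_p(r^{\la}\Phi)=0$ in spherical coordinates), or, equivalently, by comparing $H$ near an interior point of the ray with the model radial $p$-harmonic function $\rho^{1/(N-1)}$ of the distance $\rho$ to the ray in the transverse $\R^{N-1}$. A dominant-balance analysis of that ODE near $\theta=\pi$ forces $\Phi(\theta)\sim c\,(\pi-\theta)^{(p+1-N)/(p-1)}=c\,(\pi-\theta)^{1/(N-1)}$ for some $c>0$; note the exponent is positive exactly because $p+1>N$, the same condition guaranteeing $\la(\pi)>0$. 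Consequently $(\pi-\theta)\,|\Phi'/\Phi|\to \tfrac1{N-1}$, and since $\sin\theta\sim\pi-\theta$ this yields $\liminf_{\theta\to\pi}\sin\theta\,|\Phi'/\Phi|=\tfrac1{N-1}>0$. Combining the three regimes produces a uniform $\La=\La_{\Om,H}>0$, and Theorem~\ref{Thm:easy} (with $\al=0$) then gives $\mathbb{H}_{0,N}(\mathcal{C}_\pi)\ge\big(\La(N-1)/N\big)^N>0$, completing the proof.
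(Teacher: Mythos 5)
Your proposal follows the paper's proof: apply Theorem~\ref{Thm:easy} with $\al=0$ and $U=H$, the ALV positive $p$-harmonic function $r^{\la}\Phi(\theta)$ on $\mathrm{int}(\mathcal{C}_\pi)$ with $\la(\pi)=(p+1-N)/p=1/p>0$, so that the whole corollary reduces to checking $\frac{|\nabla H|}{H}\,\gd_\Gw\ge\La>0$. The only difference is that where you rederive this estimate by hand (computing $\gd_\Gw$ explicitly and extracting the boundary asymptotics $\Phi(\theta)\sim c\,(\pi-\theta)^{1/(N-1)}$, hence $\sin\theta\,|\Phi'/\Phi|\to \frac{1}{N-1}$), the paper simply cites \cite[(3.21)]{ALV} for the same lower bound; your sketch is consistent with that reference and correctly identifies this as the only nontrivial step.
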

\begin{proof}
From \cite[Theorem A]{ALV}, it follows that there exists a unique positive $p$-harmonic function  in $\mathcal{C}_{\pi}$ of the form $H(x)=H(r_x,\theta_x)=r_x^{\la_{\pi}} \Phi(\theta_x)$ ($r_x=|x|$, $\theta_x=\arccos(\braket{\frac{x}{|x|},e_N}$), where $\la_{\pi}=(p+1-N)/p=1/p$, and $\Phi \in C^{\infty}([0,\pi])$ is a decreasing function satisfying $\Phi(0)=1$, $\Phi(\pi)=0$. Take $U=H$ in Theorem \ref{Thm:easy}. Then, it follows from \cite[(3.21)]{ALV} that $\La_{\mathcal{C}_{\pi},U}>0$. Consequently, Theorem~\ref{Thm:easy} implies that $\mathbb{H}_{0,p}(\mathcal{C}_{\pi})>0$.
\end{proof}
\begin{remark} \rm
As a consequence of Corollary \ref{Cor:easy}, and  \cite[Theorem 3]{Lewis}, we infer that a half-line in $\R^N$ is uniformly $N$-fat.
\end{remark}

Observe that under the assumptions of Theorem~\ref{Thm:easy}, we obtain a lower bound for the best Hardy constant for the following Hardy inequality (with $\ga=0$):
\begin{align} \label{Eqn:var_Hardy}
\int_{\Om} |\nabla\varphi|^p \dx \geq C \int_{\Om} \frac{|\varphi|^p}{|x-x_{\partial \Om}|^{p}} \dx ,
\end{align}
where $x_{\partial \Om} \in \partial \Om$ is fixed, and $C>0$.
\begin{corollary}
Let $\Om$ be a domain in $\R^N$ satisfying at some point  $x_{\partial \Om} \in \partial \Om$ the $(\gb,\infty)$-exterior cone condition with $\beta \in (0,\pi]$ if $p+1>N$ and $\beta \in (0,\pi)$ if $p+1\leq N$. Then there exists $\la(\beta)>0$ such that the Hardy inequality \eqref{Eqn:var_Hardy} holds with $C=\left[ \frac{p-1}{p}\right]^p \la(\beta)^p$. Moreover, $\la(\beta) \geq \frac{p+1-N}{p}$ for any $\beta \in (0,\pi]$  when $p+1>N$.
\begin{proof}
Take $U=H_{x_{\partial \Om}}$ in Theorem \ref{Thm:easy}, where $H_{x_{\partial \Om}}$ is as in \eqref{eq_Hy}. Clearly, $U$ is a positive $p$-supermarmonic (in fact, harmonic) function in $\Om$. Further, due to the homogeneity of  $H_{x_{\partial \Om}}$, we have 
$$\frac{|\nabla U|}{U}\delta_{\Om} \geq \la(\beta) \,.$$
Thus, following the proof of Theorem \ref{Thm:easy}, we obtain \eqref{Eqn:var_Hardy} with $C\geq \left[ \frac{p-1}{p}\right]^p \la(\beta)^p$. Moreover, recall that for any $\beta \in (0,\pi]$, we have $\la(\beta) \geq \la_{\pi}$, and $\la_{\pi} = \frac{p+1-N}{p}$   when $p+1>N$.
\end{proof}

\end{corollary}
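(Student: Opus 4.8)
The plan is to specialize Theorem~\ref{Thm:easy} to the choice $U = H_{x_{\partial\Om}}$ furnished by \eqref{eq_Hy}, but to run its proof with the weight $\gr(x) := |x - x_{\partial\Om}|$ in place of $\gd_\Om$. First I would record that $U$ is an admissible positive $p$-superharmonic function in $\Om$: the $(\gb,\infty)$-exterior cone condition at $x_{\partial\Om}$ supplies a rotation $\Theta$ with $\overline{\Om} \subset \mathcal{C}_\gb(x_{\partial\Om},\Theta)$, and since $\Om$ is open this forces $\Om \subset \mathrm{int}(\mathcal{C}_\gb(x_{\partial\Om},\Theta))$; by Remark~\ref{Trans_harmonic} the function $H_{x_{\partial\Om}}$ is positive and $p$-harmonic there, hence so is its restriction to $\Om$. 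The hypothesis $\gb \in (0,\pi]$ when $p+1>N$ and $\gb \in (0,\pi)$ when $p+1 \le N$ is exactly what guarantees $\la(\gb)>0$ by \cite{ALV}, so that $U$ is genuinely homogeneous of positive degree $\la = \la(\gb)$ about $x_{\partial\Om}$.

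The key point is that the target weight in \eqref{Eqn:var_Hardy} is $\gr^{-p}$, which is precisely what the homogeneity of $U$ produces. Indeed, since $U = \gr^{\la}\Phi$, the radial derivative gives $|\nabla U| \ge \la\,\gr^{\la-1}\Phi \ge 0$, whence
$$\frac{|\nabla U|}{U}\,\gr \ge \la(\gb) > 0 \qquad \text{in } \Om.$$
I would then re-examine the proof of Theorem~\ref{Thm:easy} with $\al = 0$ and with $\gr$ playing the role of $\gd_\Om$. The crucial simplification is that, in the chain-rule computation of $-\De_{\al,p}(U^\nu - U^\eta)$, the weight $\gd_\Om$ enters \emph{independently} only through the term carrying the factor $\al\,(\nabla U\cdot\nabla\gd_\Om)/\gd_\Om$, which vanishes identically when $\al = 0$. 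Apart from that term, the weight appears solely as the common factor shared by the gradient bound and by the potential, and this common factor $\gr^{-p}$ cancels in the reduced pointwise inequality. Thus $U^\nu - U^\eta$ (after normalizing $U \ll 1$ on each $K_n$, which leaves the scale-invariant quotient $\frac{|\nabla U|}{U}\gr$ unchanged) is a positive supersolution of $-\De_p u = \la_\nu\,\la(\gb)^p\,\gr^{-p}\,|u|^{p-2}u$ in $K_n$, and the reduced inequality is exactly the one already verified via Proposition~\ref{Prop:computation}.

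Applying the AAP theorem for the ordinary $p$-Laplacian with the potential $\la_\nu\la(\gb)^p\gr^{-p}$, exactly as in the exhaustion and Harnack-convergence step of Theorem~\ref{Thm:easy}, yields the nonnegativity of $\int_\Om(|\nabla\vgf|^p - \la_\nu\la(\gb)^p\gr^{-p}|\vgf|^p)\dx$ on $C_c^\infty(\Om)$. Since here $\hat\al = -|\al|/\La = 0$, optimizing $\nu \searrow (p-1)/p$ drives $\la_\nu \nearrow \big(\tfrac{p-1}{p}\big)^p$ and hence gives \eqref{Eqn:var_Hardy} with $C = \big(\tfrac{p-1}{p}\big)^p\la(\gb)^p$. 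For the final assertion I would invoke the monotonicity of the cone exponent: a larger aperture corresponds to a larger angular domain, so $\la(\gb)$ is non-increasing in $\gb$ (part of the analysis in \cite{ALV}); consequently $\la(\gb) \ge \la(\pi)$ for every $\gb \in (0,\pi]$, and $\la(\pi) = (p+1-N)/p$ when $p+1>N$.

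I expect the main obstacle to be the bookkeeping of the second step: one must check that the proof of Theorem~\ref{Thm:easy} is genuinely weight-agnostic once $\al = 0$, i.e., that nothing beyond the vanishing $\al$-term secretly exploits a property of $\gd_\Om$ not shared by $\gr$, and that the AAP equivalence (Theorem~\ref{aap_thm}) together with the Harnack convergence principle applies to the potential $\la_\nu\la(\gb)^p\gr^{-p}$ attached to the standard $p$-Laplacian. Since $\gr$ is smooth and bounded away from $0$ on each $K_n \Subset \Om$, these are routine verifications, but they are the points requiring care.
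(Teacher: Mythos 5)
Your proposal is correct and takes essentially the same approach as the paper: choose $U=H_{x_{\partial\Om}}$ and rerun the supersolution construction of Theorem~\ref{Thm:easy} with $\al=0$ (so the weight enters only through the potential), then let $\nu\searrow (p-1)/p$ and invoke the monotonicity $\la(\gb)\geq\la(\pi)=(p+1-N)/p$ from \cite{ALV}. If anything, your write-up is more precise than the paper's one-line proof: the paper's displayed estimate $\frac{|\nabla U|}{U}\,\delta_{\Om}\geq\la(\gb)$ is literally false for the boundary distance, since homogeneity only yields $\frac{|\nabla U|}{U}\geq \la(\gb)/|x-x_{\partial\Om}|$ and $\delta_{\Om}(x)\leq|x-x_{\partial\Om}|$, so it must be read with $|x-x_{\partial\Om}|$ in place of $\delta_{\Om}$ --- exactly the substitution $\gr(x)=|x-x_{\partial\Om}|$ that you make explicit and correctly verify to be harmless when $\al=0$, which is also the weight actually appearing in \eqref{Eqn:var_Hardy}.
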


%

Next, using Theorem \ref{Thm:easy}, we proceed to prove our main theorem.

\begin{proof}[Proof of Theorem \ref{Thm:main}]
Since $\Gw$ satisfies the $(\gb,\infty)$-uniform exterior cone condition, and $0<\gb<\gg< \pi$, it follows that $\Phi(\gb)>0$.

Let $\{ K_n \}_{n \in \N}$ be a smooth exhaustion of $\Om$ such that $K_n \Subset K_{n+1}\Subset \Gw$.  Fix $n \in \N$ and choose $\varepsilon_0>0$ small enough such that $K_n \subset \Om_{\varepsilon_0}$ and also $-(|\al|/\la \Phi(\gb)^{1/\gl})(1+\varepsilon_0)+p>1$.  By Proposition \ref{Prop:Divya_Cal}, for any $0<\varepsilon<\varepsilon_0$, and all $x \in \Om_{\varepsilon}$, there exists $r=r(x,\varepsilon)>0$ such that
\begin{align} \label{Div_Cal}
\delta_{\Om}(y) \leq |y-Px| \leq (1+\varepsilon) \delta_{\Om}(y)  \qquad  \forall y \in B_r(x) \,.   
\end{align}
Now, since $\overline{K_n}$ is compact, there exist  $n_{\varepsilon} \in \N$ and $\{x_i\}_{i=1}^{n_{\varepsilon}}\subset \overline{K_n}$ such that
$K_n \subset \cup_{i=1}^{n_{\varepsilon}}B_{r_i}(x_i)$, where $r_i>0$ is such that \eqref{Div_Cal} holds on $B_{r_i}(x_i)$. For each $i\in\{1,\ldots,n_{\varepsilon}\}$, fix $Px_i \in \partial \Om$ such that $\delta_{\Om}(x_i)=|x-Px_i|$. Note that $K_n \subset \mathcal{C}_{\beta}(Px_i,\Theta_{Px_i})$ for all $i=1,\ldots,n_{\varepsilon}$, where $\Theta_{Px_i}$ are rotations in $\R^N$ depending on the point $Px_i$. Hence, each $H_{Px_i}$ is a positive supersolution of $-\De_p \varphi=0$ in $K_n$, where $H_{Px_i}$ is as in \eqref{eq_Hy}. 
 Using \cite[ Theorem 3.23]{HKM}, we infer that 
$$u_{\varepsilon}:=\min \{H_{Px_i} \mid  i=1,2,\ldots,n_{\varepsilon}\}$$ 
is a continuous positive supersolution of 
$-\De_p \varphi=0$ in $K_n$.    

We claim that 
\begin{align} \label{Eq:claim}
\frac{|\nabla u_{\varepsilon}(x)|}{u_{\varepsilon}(x)}\delta_{\Om}(x) \geq  \frac{\la \Phi(\gb)^{1/\la}}{1+\varepsilon} \qquad \mbox{for almost all } x\in K_n.
\end{align}
Indeed,  in view of Lemma \ref{Lem:UCP}, for a.e. $x \in K_n$ there exists a neighborhood $\mathcal{N}_x$ and a unique $H_{Px_j}$ ($j=j(x) \in \{1,2,...,n_{\varepsilon}\}$) such that $u_{\varepsilon}=H_{Px_j}$ in $\mathcal{N}_x$. Notice that  $H_{Px_j}$ is a homogeneous function of degree $\gl>0$. Hence,
\begin{align} \label{Homogenity}
|\nabla u_{\varepsilon}(x)|\geq \gl  |x-Px_{j}|^{\gl-1} \Phi(\theta_{Px_{j},\Theta_{Px_j},x})  \qquad \mbox{in } \ \
\mathcal{N}_x.
\end{align}
Since $x \in B_{r_i}(x_i)$ for some $i \in \{1,2,...,n_{\varepsilon}\}$, we get $|x-Px_i|^{\la}\leq (1+\varepsilon)^{\la}\delta_{\Om}(x)^{\la}$ from \eqref{Div_Cal}. On the other hand,  for any $j \neq i$, either 
$\Phi(\gb) |x-Px_j|^{\la}\leq (1+\varepsilon)^{\la}\delta_{\Om}(x)^{\la}$ or $\Phi(\gb) |x-Px_j|^{\la}> (1+\varepsilon)^{\la}\delta_{\Om}(x)^{\la}$. If the second possibility occurs, then 
$$H_{P_{x_i}}(x) \leq |x-Px_i|^{\la} \leq  (1+\varepsilon)^{\la}\delta_{\Om}(x)^{\la} <\Phi(\gb) |x-Px_j|^{\la} \leq H_{P_{x_j}}(x).$$ 
which contradicts the definition of $u_{\varepsilon}$. Thus,  we have 
\begin{align} \label{Est_1}
\Phi(\gb) |x-Px_j|^{\la}\leq (1+\varepsilon)^{\la}\delta_{\Om}(x)^{\la}
\end{align}
in $\mathcal{N}_x$. Now, using \eqref{Homogenity} and \eqref{Est_1}, we have 
\begin{align*}
\frac{|\nabla u_{\varepsilon}(x)|}{u_{\varepsilon}(x)}\delta_{\Om}(x) 
\geq \la\frac{\delta_{\Om}(x)}{|x-Px_j|} \geq \frac{\la \Phi(\gb)^{1/\la}}{1+\varepsilon} \quad \mbox{for almost all } x\in K_n.
\end{align*}
  This proves our claim \eqref{Eq:claim}.
 Note that estimate \eqref{Eq:claim} does not depend on the normalization of $u_\vge$ and we may assume that $u_\vge \ll 1$.  
 Now, using the supersolution construction and Agmon's trick as in Theorem \ref{Thm:easy} with $U=u_\vge$, $\Gl= \la \Phi(\gb)^{1/\la}/(1+\varepsilon)$, and 
 $\al_{\varepsilon}=\frac{(1+\varepsilon)\al}{\la \Phi(\gb)^{1/\la}}$, we obtain that 
the best Hardy-constant for the operator $-\Gd_{\ga,p}$ with respect to the Hardy-weight $\gd_\Gw^{-(\ga+p)}$ in $K_n$ is at least
$$ \left|\frac{p-1-|\al_{\varepsilon}|}{p}\right|^p \frac{\la^p \Phi(\gb)^{p/\la}}{(1+\vge)^p} \, .$$ 
 Thus, by taking $\varepsilon \ra 0$, it follows that
 $$\hspace{30mm} \mathbb{H}_{\al,p}(\Om) \geq \mu_{\al,p,H}(\Om)=
 \left|\frac{\la \Phi(\gb)^{1/\gl}(p-1) -|\al|}{p}\right|^p . \hspace{30mm}\qedhere$$
 \end{proof}
 
\begin{remark} \label{Rmk:weak} \rm
$(i)$ Let $p+1>N$ and $\Om$ be a domain in $\R^N$ satisfying the $(\beta,\infty)$-uniform exterior cone condition with $\gb \in (0,\pi)$. Consider the unique positive $p$-harmonic function $H$ in $\mathcal{C}_{\pi}$ of the form $H(x)=H_{\pi}(r_x,\theta_x)=r_x^{\la} \Phi(\theta_x)$ ($r_x=|x|$, $\theta_x=\arccos(\braket{\frac{x}{|x|},e_N}$), where $\la=\la(\pi)= (p+1-N)/{p}>0$ and $\Phi \in C^{\infty}([0,\pi])$ is strictly decreasing in $\theta$ and $\Phi(0)=1$, $\Phi(\pi)=0$, see \cite{ALV} for the existence of such a $p$-Harmonic function. If 
$\la \Phi(\beta)^{1/\la}(p-1)>|\al|$,
then  
$$\mathbb{H}_{\al,p}(\Om) \geq \mu_{\al,p,H}(\Om)=\left|\frac{\la \Phi^{1/\gl}(\beta)(p-1) -|\al|}{p}\right|^p .$$ 
In particular, for $\al =0$ we obtain 
 $\mathbb{H}_{0,p}(\Om) \geq \mu_{0,p,H}(\Om)=\left|\frac{p-1}{p}\right|^p \la(\pi)^p \Phi(\beta)^{p/\la}$. Furthermore, for $p=2$ and $N=2$, one can show that $\Phi(\theta)=\cos(\theta/2)$. Hence, $\mathbb{H}_{0,2}(\Om) \geq \cos(\beta/2)^{4}/16$.
 
$(ii)$ In light of the above remark, we have  for a domain $\Om \subsetneq\R^N$ satisfying the $(\beta,\infty)$-uniform exterior cone condition ($\beta \in (0,\pi)$) that
 $\mathbb{H}_{0,p}(\Om) \geq \mu_{0,p,H}(\Om)=\left|\frac{p-1}{p}\right|^p \la(\pi)^p \Phi(\beta)^{p/\la}$ when $p+1>N$. Recall that for any arbitrary domain $\Om$ and $p>N$, it is known that  $\mathbb{H}_{0,p}(\Om) \geq \left|\frac{p-N}{p}\right|^p $,  see for example \cite{GPP}. On the other hand, for $p>N$, $\gl=\la(\pi)= (p+1-N)/p$, and hence, $\mu_{0,p,H}(\Om)=\left|\frac{p-1}{p}\right|^p \left[\frac{p+1-N}{p}\right]^p \Phi(\beta)^{p/\la}$. Notice that when $p \searrow N$, the lower bound $\mu_{0,p,H}(\Om)$ is better than $\left|\frac{p-N}{p}\right|^p$.
\end{remark}

\begin{center}
	{\bf Acknowledgments}
\end{center}
The authors acknowledge the support of the Israel Science Foundation \!(grant $637/19$) founded by the
Israel Academy of Sciences and Humanities. U.D. is also supported in
part by a fellowship from the Lady Davis Foundation.
\appendix
\section{} \label{app1}
In this appendix we show that if $u$ and $v$ are two real analytic functions in an open box $B=\Pi_{i=1}^N(a_i,b_i)\subset \R^N$, and  $u=v$ on a Lebesgue measurable set $A\subset B$ with a positive measure, then  $u=v$ in $\Gw$.

Indeed, let $f=u-v$, then $f$ is a real analytic function in $B$, vanishing   on $A$. We need to show that $f=0$ in $B$. We prove this claim by induction on the dimension $N$. 

If $N=1$, then $B$ is an open interval $(a,b)$ and therefore, there exists $n_0$ such that the set $A\cap [a+1/n_0,b-1/n_0]$ is infinite, hence this set has a limit point, and by the classical identity theorem $f=0$ in $B$.

Suppose that that the claim is true for boxes in $\R^N$, where $N\geq 1$, and consider an open box $B_{N+1}=B_N\times B_1\subset \R^{N+1}$. We write points in   
$B$ as $(x,t)$
with $x\in B_N$ and $t\in B_1$.

For each fixed $t\in B_1$
the function $x\mapsto f(x,t)$ is real analytic on $B_N$,
and for each fixed $x\in B_N$
the function $t\mapsto f(x,t)$ is real analytic on $B_1$.

Consider a real analytic $f$ on the box $B_{N+1}$ such that 
$f=0$ on a measurable set $A\subset B_{N+1}$ such that $|A|_{N+1}>0$.

By Fubini's theorem,
$$|A|_{N+1}=\int_{B_1}\int_{B_N} \chi_A(x,t)\,\dx\,\dt.$$
Since $|A|_{N+1}>0$, the inner integral above is positive  for $t\in A_1\subset B_1$
with $|A_1|_1>0$. By the induction hypothesis, for each $t\in A_1$
the function $x\mapsto  f(x,t)$ vanishes identically on $B_N$.
Hence, $f=0$ on $B_N\times A_1$. So, for each fixed $x\in B_N$, the mapping $t\mapsto f(x,t)$ vanishes
on $A_1\subset B_1$, and recall that $|A_1|_1>0$. By our proof for $N=1$, 
this function vanishes on $B_1$.
It follows that $f=0$ on $B_N\times B_1=B_{N+1}$.

\section{Auxiliary inequalities}\label{app2}
In the section we prove an elementary inequality that plays crucial role in the proofs of theorems~~\ref{Thm:easy} and \ref{Thm:main}.
\begin{proposition} \label{Prop:computation} Let $a \in \R$ and $\nu < \eta$ be such that,  either $\nu,\eta \in [\frac{a+p-1}{p},\frac{a+p-1}{p-1}]$ if $a+p>1$, or $\nu,\eta \in [\frac{a+p-1}{p},0]$ if $a+p<1$ respectively. Then 
	\begin{align} \label{to_prove1}
	(p-2)\frac{\la_{\nu} \eta}{\nu} + \la_{\eta} \frac{|\nu|^{p-2}}{|\eta|^{p-2}} <  \la_{\nu} (p-1) \,,
	\end{align}
	where $\la_{t}=|t|^{p-2}t[a+(1-t)(p-1)]$ with $t=\nu,\eta$.
\end{proposition}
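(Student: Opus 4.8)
The plan is to reduce \eqref{to_prove1} to a single elementary factorization. First I would set $c:=a+p-1$, so that $\lambda_t=|t|^{p-2}t\,(c-(p-1)t)$ and the two cases of the hypothesis read $\nu,\eta\in[\frac{c}{p},\frac{c}{p-1}]$ when $c>0$, and $\nu,\eta\in[\frac{c}{p},0]$ when $c<0$. In either case the smaller point $\nu$ is nonzero---it is positive in the first case, and strictly negative in the second (there $\nu<\eta\le 0$)---so that $|\nu|^{p-2}>0$.

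The key observation is that both terms on the left-hand side of \eqref{to_prove1} carry the common factor $|\nu|^{p-2}\eta$. Indeed,
\[
\frac{\lambda_\nu\,\eta}{\nu}=|\nu|^{p-2}\eta\,(c-(p-1)\nu),\qquad \lambda_\eta\,\frac{|\nu|^{p-2}}{|\eta|^{p-2}}=|\nu|^{p-2}\eta\,(c-(p-1)\eta),
\]
where in the second identity I may assume $\eta\neq 0$ (the boundary value $\eta=0$, which can occur only in the branch $c<0$, being recovered at the end by continuity of the simplified expressions). Collecting terms, the left-hand side becomes $(p-1)|\nu|^{p-2}\eta\,[\,c-(p-2)\nu-\eta\,]$, while the right-hand side is $(p-1)|\nu|^{p-2}\nu\,(c-(p-1)\nu)$. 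Dividing by the positive quantity $(p-1)|\nu|^{p-2}$, the claim reduces to
\[
\eta\,[\,c-(p-2)\nu-\eta\,]<\nu\,(c-(p-1)\nu).
\]

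Next I would form the difference of the two sides. The quadratic part $\eta^2+(p-2)\nu\eta-(p-1)\nu^2$ factors, its roots in $\eta$ being $\nu$ and $-(p-1)\nu$, as $(\eta-\nu)(\eta+(p-1)\nu)$; combining this with the term $c(\nu-\eta)$ gives the clean identity
\[
\nu\,(c-(p-1)\nu)-\eta\,[\,c-(p-2)\nu-\eta\,]=(\eta-\nu)\,[\,\eta+(p-1)\nu-c\,].
\]
It then remains only to check that this product is strictly positive. The first factor is positive because $\nu<\eta$. For the second, I would use only the left endpoint constraint $\nu\ge\frac{c}{p}$: since $p>0$ this gives $p\nu\ge c$, and since $\eta>\nu$ we obtain $\eta+(p-1)\nu>p\nu\ge c$, i.e.\ $\eta+(p-1)\nu-c>0$. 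This establishes \eqref{to_prove1} in both cases simultaneously.

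I do not anticipate a genuine obstacle: the argument is purely algebraic, and the only points requiring care are the two reductions (the common-factor simplification and the factorization of the quadratic) together with the sign bookkeeping that makes $|\nu|^{p-2}$ positive, so that dividing by it preserves the inequality. It is worth noting that the right endpoints ($\frac{c}{p-1}$ and $0$) are never used; the whole inequality rests on the single lower bound $\nu\ge\frac{a+p-1}{p}$.
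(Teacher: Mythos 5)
Your proof is correct and follows essentially the same route as the paper's: both reduce the claim by elementary algebra to the single inequality $\eta + (p-1)\nu > a+p-1$, which follows from $\nu \ge \frac{a+p-1}{p}$ and $\eta>\nu$. The only cosmetic difference is that you cancel the positive factor $(p-1)|\nu|^{p-2}$ and factor the difference of the two sides as $(\eta-\nu)\,[\eta+(p-1)\nu-(a+p-1)]$, whereas the paper divides through by $\lambda_\nu$ and by $\nu-\eta$; your variant has the minor merit of making explicit that only the left endpoint of the admissible interval is actually used.
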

\begin{proof} Since $\la_{\gn}>0$, it follows that  \eqref{to_prove1} holds if 
	\begin{align} \label{s1}
	(p-2)\frac{\eta}{\nu} + \frac{\la_{\eta}}{\la_{\nu}} \frac{|\nu|^{p-2}}{|\eta|^{p-2}} <   (p-1) \,.
	\end{align}
	An elementary computation shows that \eqref{s1} is true if
	$$\left(\frac{\eta}{\nu}\right) \frac{-1[a+(1-\nu)(p-1)] +[a+(1-\eta)(p-1)]}{a+(1-\nu)(p-1)} < \frac{(p-1)(\nu-\eta)}{\nu} \,, $$ 
	which is equivalent to 
	\begin{align} \label{s2}
	\left(\frac{\eta}{\nu} \right) \frac{(\nu-\eta)}{a+(1-\nu)(p-1)} < \frac{(\nu-\eta)}{\nu} \,.
	\end{align}
	Since $\nu<\eta$ and $\nu [a+(1-\nu)(p-1)]$ is nonnegative, \eqref{s2} holds if
	\begin{align} \label{s3}
	\eta > a+(1-\nu)(p-1) \,,
	\end{align}
	which is indeed true as  $a+(1-\nu)(p-1)\leq \nu$.
\end{proof}


\end{document}